\pdfoutput=1
\documentclass[conference]{IEEEtran}
\IEEEoverridecommandlockouts

\usepackage[T1]{fontenc}
\usepackage[utf8]{inputenc}
\usepackage{amsmath}
\usepackage{amssymb}
\usepackage{amsthm}
\usepackage{graphicx}
\usepackage{booktabs}
\usepackage{array}
\usepackage{url}
\usepackage[ruled,vlined,linesnumbered]{algorithm2e}

\newcommand{\TS}{\textsc{TwoSum}}
\newcommand{\FTS}{\textsc{FastTwoSum}}
\newcommand{\VS}{\textsc{VecSum}}
\newcommand{\VSK}{\textsc{VecSum$K$}}
\newcommand{\RP}{\textsc{RenormBF-pair}}
\newcommand{\RN}{\mathrm{RN}}
\newcommand{\ulp}{\mathrm{ulp}}
\newcommand{\vx}{\mathbf{x}}
\newcommand{\vb}{\mathbf{b}}
\newcommand{\vr}{\mathbf{r}}
\newcommand{\vp}{\mathbf{p}}
\newcommand{\vq}{\mathbf{q}}
\newcommand{\vs}{\mathbf{s}}

\newtheorem{proposition}{Proposition}
\newtheorem{lemma}{Lemma}
\newtheorem{observation}{Observation}
\theoremstyle{remark}
\newtheorem{remark}{Remark}

\begin{document}

\title{A Branch-Free General Renormalization Scheme\\for Pair Arithmetic
and Its Performance Evaluation}

\author{\IEEEauthorblockN{Tomonori Kouya}
\IEEEauthorblockA{Faculty of Science and Engineering\\
Otemon Gakuin University, Ibaraki, Osaka, Japan\\
Email: t-koya@haruka.otemon.ac.jp}}

\maketitle

\begin{abstract}
Pair arithmetic omits the renormalization stage performed at the end of each
operation of multi-component multiple-precision arithmetic, reducing the
operation count and, by eliminating conditional branches, easing SIMD
vectorization. Renormalization cannot be omitted entirely, however, because
iterative solvers may then fail to converge. We propose RenormBF-pair, a branch-free
fixed-trip-count renormalizer parameterized by the word count \(K\) and the
input length \(n\): a VecSum sweep, a tail fold, and \(r\) rounds of a
FastTwoSum chain, costing \(6(n-1)+(n-K)+3r(K-1)\) flops. Over \(200{,}000\)
trials per condition, the smallest round count producing no non-overlap
violation was \(r=1\) for \(K \leq 3\) and \(r=2\) for \(K=4\), whereas the same
cost spent on a VecSum\(K\)-style TwoSum chain fails for \(K=4\). We
delimit with explicit counterexamples both the input condition under which the
sum relation holds and the fact that the scheme does not, in general, guarantee
a correctly rounded lowest component. Integrating it into CG and BiCGStab over pair arithmetic
in binary64 and binary32, we sweep \(240\) paired runs including real matrices
screened from SuiteSparse. The proposed scheme and VecSum\(K\) returned bitwise
identical words wherever both succeeded, so they are indistinguishable for CG in
binary64; differences appear only where VecSum\(K\) breaks, namely binary32 and
BiCGStab. There the \(22\)--\(16\) win--loss record is not significant (sign test,
\(p=0.42\)); the clear bias is structural, non-overlap violations dropping from
\(66\) rows to \(37\). The cost is \(+3.4\%\) for the whole CG solver.
\end{abstract}

\begin{IEEEkeywords}
multiple-precision floating-point arithmetic, pair arithmetic, quasi
multi-word, renormalization, error-free transformation, Krylov subspace
methods, SIMD
\end{IEEEkeywords}

\section{Introduction}

Standard IEEE~754 binary32 and binary64 arithmetic may be insufficient to
maintain numerical stability in scientific computations; Krylov subspace
methods provide a typical example. Higher-precision arithmetic may then be
required, and one way to implement it is multi-component arithmetic, which
represents a number as a sum of several binary32 or binary64 values and builds
the four arithmetic operations out of error-free transformations.

Multi-component (multi-word) arithmetic, which represents a $K$-word number as
the sum $z = z_0 + z_1 + \cdots + z_{K-1}$ of $K$ floating-point values, is
widely used as double-word ($K=2$), triple-word ($K=3$) and quad-word ($K=4$)
arithmetic \cite{HidaLiBailey01,JoldesMullerPopescu18,FabianoMullerPicot19}.
Throughout, $p$ denotes the significand precision in bits ($53$ for binary64,
$24$ for binary32) and $u=2^{-p}$ the unit roundoff. Every operation of this
scheme ends with a renormalization stage that makes the output
\emph{non-overlapping},
\begin{equation}
|z_{k+1}| \;\le\; \tfrac12\ulp(z_k) \qquad (k = 0,\dots,K-2).
\label{eq:nonoverlap}
\end{equation}
That stage is a \FTS{} chain: it carries no conditional branch, but it does
create a chain of sequential dependences inside the operation.

Pair arithmetic omits this stage. The pair arithmetic of Lange and Rump
\cite{LangeRump20} ($K=2$) and the extension of Ozaki and Imamura
\cite{OzakiImamura23} ($K=3,4$) are of this kind, and Mukunoki and Ozaki
\cite{MukunokiOzaki25} called it quasi multi-word arithmetic (QDW/QTW/QQW) and
applied it to sparse iterative solvers. The operation counts drop, e.g.\ DWadd
$11 \to$ QDWadd $8$ and DWmul $7 \to$ QDWmul $4$, and removing the dependence
chain also improves SIMD efficiency. The author confirmed this earlier for
double-word BLAS1 kernels without renormalization and their application to
explicit extrapolation methods \cite{Kouya19}.

Renormalization cannot, however, be omitted completely. Mukunoki and Ozaki
\cite{MukunokiOzaki25} report that ``no convergence was achieved without
normalization,'' and insert one renormalization per iteration, immediately
after the AXPY that updates the residual vector $\vr$. The renormalizer is
\FTS{} for $K=2$ and \textsc{VecSum3} for $K=3$
($[c_1,c_2]\leftarrow\TS(c_1,c_2)$ followed by
$[c_2,c_3]\leftarrow\TS(c_2,c_3)$). The same paper states that
\textsc{VecSum3} is not a strict renormalization, and lists the optimal
frequency and placement of renormalization as an open problem.

This paper is about the \emph{content} of that single renormalization: rather
than its frequency or placement, we ask how strong one renormalization can be
made under a branch-free fixed-trip-count constraint. The proposed \RP{} is a
single construction parameterized by $K$ and $n$ and accepts any $n\ge K$
(Section~\ref{sec:alg}). A sum relation is given as a proposition, but its
hypothesis depends on the exponent layout of the input and is not established
in general by one sweep (counterexample in Remark~\ref{rem:notorder}).
\RP{} also does not, in general, guarantee a correctly rounded lowest
component, for a reason isomorphic to the Table Maker's Dilemma; the related
limitation of sweep-based schemes is discussed in
Section~\ref{sec:guarantee}. Section~\ref{sec:num}
determines the required configuration experimentally and compares it against a
cost-equivalent \VSK{}-style chain, and Section~\ref{sec:bench} plugs the scheme
into CG and BiCGStab over pair arithmetic in binary64 and binary32, sweeping
$240$ paired runs including real matrices screened from SuiteSparse. The results do
not yield a simple ranking: the two renormalizers returned identical words
wherever both succeeded, so they cannot be told apart for CG in binary64, and
differences appear only where \VSK{} actually breaks.

\section{A Branch-Free General Renormalizer for Pair Arithmetic}
\label{sec:alg}

\subsection{Problem setting}

The input is $n$ words $v_0,\dots,v_{n-1}$ whose sum $\sum_i v_i$ equals the
target value, i.e.\ the intermediate result of a pair-arithmetic operation. The
words are not necessarily sorted by magnitude and may overlap. The goal is to
return $K$ words satisfying the non-overlap condition \eqref{eq:nonoverlap};
whether that condition is achieved is evaluated experimentally in
Section~\ref{sec:num}.

A \VSK{}-style \TS{} chain does not accept this setting: it fixes the input
length to $n=K$ and implicitly expects the input to be already sorted by
magnitude. Calling the proposed scheme \emph{general} means that a single
construction parameterized by $K$ and $n$ handles any $n \ge K$. It does
\emph{not} mean that the output value is guaranteed for an input in arbitrary
order. The hypothesis required by step~(2) depends on the exponent layout of
the input and is not established in general by one sweep
(Section~\ref{sec:guarantee}, Remark~\ref{rem:notorder}). What can be stated
mathematically is only the conditional claim ``if the exponent condition holds
at every gate, then Proposition~\ref{prop:sum} holds''; the inputs we target
are those generated by pair-arithmetic networks, that is, input~[II] of
Section~\ref{sec:num}.

\subsection{The proposed algorithm}

\begin{algorithm}[t]
\caption{$\RP(v, n, K, r)$ --- branch-free general renormalization of a
pair-arithmetic output}
\label{alg:renormpair}
\SetKwInOut{Input}{Input}\SetKwInOut{Output}{Output}
\Input{$v_0,\dots,v_{n-1}$ (no overlap assumption; on ordering see
       Remark~\ref{rem:notorder}), word count $K$, number of chain rounds
       $r$}
\Output{$K$ words $z_0,\dots,z_{K-1}$ (non-overlap is evaluated in
        Section~\ref{sec:num})}
\BlankLine
\tcp{(0) \VS{} sweep: redistribute components toward the leading
     positions \hfill $6(n-1)$ flops}
\For{$i \leftarrow n-2$ \KwTo $0$}{
  $(v_i, v_{i+1}) \leftarrow \TS(v_i, v_{i+1})$\;
}
\BlankLine
\tcp{(1) tail fold: fold the discarded words into $v_{K-1}$, from the
     end of the array \hfill $n-K$ flops}
\If{$n > K$}{
  $t \leftarrow v_{n-1}$\;
  \For{$i \leftarrow n-2$ \KwTo $K$}{
    $t \leftarrow \mathrm{fl}(t + v_i)$ \tcp*{plain addition}
  }
  $v_{K-1} \leftarrow \mathrm{fl}(v_{K-1} + t)$\;
}
\BlankLine
\tcp{(2) $r$ rounds of the \FTS{} chain \hfill $3r(K-1)$ flops}
\For{$q \leftarrow 1$ \KwTo $r$}{
  \For{$i \leftarrow K-2$ \KwTo $0$}{
    $(v_i, v_{i+1}) \leftarrow \FTS(v_i, v_{i+1})$\;
  }
}
\BlankLine
\Return $v_0,\dots,v_{K-1}$\;
\end{algorithm}

Algorithm~\ref{alg:renormpair} gives the proposed \RP{}. Its cost is
\begin{equation}
C_{\RP}(n,K,r) \;=\; 6(n-1) \;+\; (n-K) \;+\; 3r(K-1)
\label{eq:cost}
\end{equation}
flops and depends only on $n$, $K$ and $r$. Since $n$, $K$ and $r$ are
compile-time constants, the \texttt{if} of step~(1) disappears under
specialization, and neither a run-time branch nor a variable trip count
remains. The smallest round count that produced no violation in the
measurements of Section~\ref{sec:num} was
\begin{equation}
r \;=\; \begin{cases} 1 & (K \le 3) \\ 2 & (K = 4) \end{cases}
\label{eq:rchoice}
\end{equation}
This is the minimum over the range we tested, not a proof of sufficiency.

The three stages play the following roles.

\paragraph*{(0) \VS{} sweep}
The \FTS{} of step~(2) requires a hypothesis
(Section~\ref{sec:guarantee}). A pair-arithmetic output is not even sorted by
magnitude, so the hypothesis does not hold as is. One sweep tends to move
larger components toward the leading positions and produces a layout in which
the hypothesis is more likely to hold. It cannot be omitted: without it \FTS{}
ceases to be an error-free transformation and the sum relation
(Proposition~\ref{prop:sum}) collapses (Section~\ref{sec:num}; roughly
$18{,}200$ out of $600{,}000$ calls for $K=3,4$). It does \emph{not}, however,
sort the components, so it does not guarantee the hypothesis in general
(Remark~\ref{rem:notorder}).

\paragraph*{(1) tail fold}
The $n-K$ words outside the $K$ retained ones are folded into the last
retained word $v_{K-1}$, starting from the end of the array. Since the sweep
tends to leave smaller components toward the end, this ordering tends to keep
the rounding error of the running sum small, although the tail is not guaranteed
to be sorted. When $n = K$ there is nothing to fold, so this step must be empty.

\begin{remark}[the $n=K$ guard cannot be omitted]
\label{rem:guard}
Without the guard, $t \leftarrow v_{n-1}$ becomes $t = v_{K-1}$, and the
assignment $v_{K-1} \leftarrow \mathrm{fl}(v_{K-1}+t)$ after the empty loop
\emph{doubles the lowest word}. Equation~\eqref{eq:cost} counts step~(1) as $0$
flops for $n=K$, so only with the guard do the cost formula and the
implementation agree. A subtle issue is that the error slips through the non-overlap test: for $K=2$ the output stays non-overlapping (violations
$0/200{,}000$) while the value is wrong. The worst measured
$\rho = |\sum_k z_k - S|/(u^K|S|)$ reaches $2^{87}$ for $K=2$, $2^{120}$ for
$K=3$ and $2^{153}$ for $K=4$.
\end{remark}

\paragraph*{(2) \FTS{} chain}
This renormalizes the $K$ retained words from the bottom upward. One round
costs $3(K-1)$ flops, half of the $6(K-1)$ flops of the \VSK{}-style \TS{}
chain (top down) that plays the same role. One round is not enough for $K=4$
(Section~\ref{sec:num}).

\subsection{Relation to \VSK{}}

\begin{observation}
\label{obs:degenerate}
Degenerating \RP{} to $n=K$ (no tail), replacing \FTS{} by \TS{} and dropping
step~(0) yields the $K-1$ stage \TS{} chain over the $K$ retained words, that
is, \VSK{}.
\end{observation}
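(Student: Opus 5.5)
This is a program-transformation identity, so the plan is to apply the three modifications of Observation~\ref{obs:degenerate} to Algorithm~\ref{alg:renormpair} one at a time and compare the straight-line code that remains with the definition of \VSK{}. The order is: first set $n=K$, then drop step~(0), then replace \FTS{} by \TS{}. The result should be the sequence of \TS{} gates over $v_0,\dots,v_{K-1}$, which is \VSK{}, with $r=1$ implied by the cost-equivalence discussion.

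\emph{Step 1 ($n=K$).} The guard of step~(1) evaluates to false, so by Remark~\ref{rem:guard} step~(1) contributes no operation and no assignment. The cost formula \eqref{eq:cost} confirms this: the term $n-K$ vanishes. Step~(0) then becomes a \TS{} sweep over exactly the $K$ retained words.

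\emph{Step 2 (drop step~(0)).} Removing the sweep leaves only step~(2). With $r=1$, step~(2) is the loop $i=K-2,\dots,0$ of $(v_i,v_{i+1})\leftarrow\FTS(v_i,v_{i+1})$, which is $K-1$ gates on adjacent pairs.

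\emph{Step 3 (replace \FTS{} by \TS{}).} This substitution gives $K-1$ adjacent \TS{} gates costing $6(K-1)$ flops. That matches the count stated for the \VSK{}-style chain in the discussion of step~(2).

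The one point that needs care is direction. Algorithm~\ref{alg:renormpair} traverses from the bottom ($i=K-2$ down to $0$), whereas the paper describes the \VSK{} chain as top down ($[c_1,c_2]$ first, then $[c_2,c_3]$, as in \textsc{VecSum3}). I would resolve this in one of two ways. The first is to read ``yields'' as equality of the gate multiset on adjacent pairs, i.e.\ the same $K-1$-stage \TS{} network up to traversal order. The second, which I would state explicitly, is to let the degeneration reverse the loop index as well, so that the resulting loop runs $i=0,\dots,K-2$ and coincides literally with \VSK{}.

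The main obstacle is therefore only this indexing convention, not any numerical argument. No error bound or exponent hypothesis is involved, because the observation asserts structural identity of the code rather than equality of outputs.
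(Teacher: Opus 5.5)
Your proposal is correct, and it is the same direct inspection of Algorithm~\ref{alg:renormpair} that the paper implicitly relies on (the paper gives no separate argument): with $n=K$ the guarded step~(1) is empty, dropping step~(0) leaves only the chain, and substituting \TS{} for \FTS{} leaves $K-1$ adjacent \TS{} gates. Your caveats are substantive, not cosmetic: you must also fix $r=1$, and the paper itself calls the \FTS{} chain bottom-up and the \VSK{} chain top-down, which give different words for $K\ge3$ (in binary64, $(1,2^{-53},2^{-53})$ becomes $(1,2^{-52},0)$ top-down but $(1+2^{-52},0,0)$ bottom-up), so only your second resolution, reversing the loop index, makes ``that is, \VSK{}'' literally true, whereas the first gives only the same multiset of gates.
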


The statement in \cite{MukunokiOzaki25} that \textsc{VecSum3} is not a strict
renormalization is explained by Observation~\ref{obs:degenerate}: for $n=K$
there is no tail to fold, so the overlap can be \emph{reduced but not removed}.

\section{Properties of \RP{}}
\label{sec:guarantee}

$\FTS(a,b)$ is defined by $s \leftarrow \mathrm{fl}(a+b)$,
$e \leftarrow \mathrm{fl}(b - \mathrm{fl}(s-a))$, and a sufficient condition for
$a+b = s+e$ to hold \emph{exactly} (i.e.\ for it to be an error-free
transformation) is the exponent condition $e_a \ge e_b$. The frequently quoted
$|a| \ge |b|$ is only sufficient for the exponent condition, not necessary.
This distinction matters below.

\begin{lemma}[\TS{} is unconditional]
\label{lem:ts}
The \TS{} of Knuth and M\o ller \cite{ORO05} satisfies $s+e = a+b$ exactly for
any inputs $a,b$, with no assumption (barring overflow).
\end{lemma}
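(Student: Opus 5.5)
The plan is to prove the classical error-free property of Knuth's \TS{} in round-to-nearest arithmetic, and to reduce it to the \FTS{} exponent condition quoted just above wherever possible. Recall that \TS{} computes
\[
s=\mathrm{fl}(a+b),\quad b'=\mathrm{fl}(s-a),\quad a'=\mathrm{fl}(s-b'),\quad \delta_b=\mathrm{fl}(b-b'),\quad \delta_a=\mathrm{fl}(a-a'),\quad e=\mathrm{fl}(\delta_a+\delta_b).
\]
The goal is to show that every one of these operations after the first is exact, and that the exact error $a+b-s$ equals $\delta_a+\delta_b$.

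The first ingredient is standard. With round-to-nearest and no overflow, the rounding error $a+b-s$ of a sum is itself a floating-point number. The reason is that $a+b-s$ is an integer multiple of $\min(\ulp(a),\ulp(b))$ and is bounded by $\tfrac12\ulp(s)$. Hence, if we can show that $a'=a$ and $b'=b-(a+b-s)$ up to exactly representable corrections, the final addition will reproduce the error exactly. Underflow needs no separate treatment: it is harmless for addition, since subnormal sums are exact, and this point will be noted once.

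By the symmetry of the sum we could reorder $a$ and $b$ by exponent. \TS{} is not symmetric in $a$ and $b$, however, so we split into two cases.

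\emph{Case $e_a\ge e_b$.} The pair $(s,b')$ is exactly what \FTS{} computes in its middle step. The exponent condition then gives, by the \FTS{} analysis referenced above, that $s-a$ is representable, so $b'=s-a$ exactly. It follows that $a'=\mathrm{fl}(s-b')=a$ exactly, so $\delta_a=0$. Likewise $\delta_b=b-(s-a)=a+b-s$ is exact because it is representable. Therefore $e=\delta_b$ and $s+e=a+b$.

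\emph{Case $e_a<e_b$.} This is the main obstacle, since $b'=\mathrm{fl}(s-a)$ may itself round. The first step is to show that $b'$ is still close to $b$ with $|b-b'|$ small. If $|a|\le \tfrac12|b|$-ish, then $s$ and $b$ are within a factor of two, and Sterbenz-type arguments give that $s-a$ is representable. Otherwise $|a|$ and $|b|$ share nearly the same exponent and the sum $s$ is exact, which again makes $b'=b$. I would then verify that $a'=\mathrm{fl}(s-b')$ is exact by Sterbenz's lemma applied to $s$ and $b'$, which have close magnitudes. Next, $\delta_a=a-a'$ and $\delta_b=b-b'$ are exact because each is a difference whose magnitude is below the ulp granularity common to both operands. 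Finally $\delta_a+\delta_b=a+b-s$, which is representable, so the last addition is exact.

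The delicate work lies in the bookkeeping of the case boundaries, namely when $s$ crosses a binade or when $s=0$. For that I would follow the published proof in the reference cited in the statement rather than redo every subcase. The zero and boundary cases are checked directly.
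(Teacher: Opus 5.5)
\paragraph*{Verdict} Your case $e_a\ge e_b$ is correct. It is exactly the reduction to Lemma~\ref{lem:fts}: $b'=s-a$, $a'=a$, $\delta_a=0$, $\delta_b=a+b-s$.

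\paragraph*{The gap} The gap is the case $e_a<e_b$, the only place where \TS{} differs from \FTS{}. The trouble there is not boundary bookkeeping: your case split is false for generic inputs.
\begin{itemize}
\item $|a|\le\frac12|b|$ does not make $s-a$ representable. Take the input of Remark~\ref{rem:notorder}, $a=2^{-100}$, $b=1$ in binary64. Then $s=1$, and $s-a=1-2^{-100}$ is not a floating-point number. Sterbenz's lemma applies to the pair $(s,b)$ and makes $s-b$ exact, not $s-a$. In this example $b'=b$ and $a'=0$, so the error is recovered by $\delta_a=2^{-100}$, not by $\delta_b$.
\item ``Otherwise $s$ is exact'' fails for operands of equal sign. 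For $b=1$, $a=1-2^{-53}$, the sum $a+b=2-2^{-53}$ rounds to $2$.
\item $b'$ need not be $s-a$ or $b$ at all. For $a=3\cdot 2^{-55}$, $b=1$ one gets $s=1$, $b'=1-2^{-53}$, $a'=\delta_b=2^{-53}$ and $\delta_a=-2^{-55}$. The error is split between the two branches.
\item Your reason for the exactness of $\delta_a,\delta_b$ (``magnitude below the ulp granularity common to both operands'') would force them to be zero.
\item The hypothesis of Sterbenz's lemma for $(s,b')$ fails under cancellation: $b=1$, $a=-(1-2^{-53})$ gives $s=2^{-53}$ and $b'=1$.
\end{itemize}
Once these steps are removed, nothing of the hard case is proved. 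A smaller point: representability of $a+b-s$ does not follow from the bound $\frac12\ulp(s)$ you quote. It needs $|a+b-s|\le\min(|a|,|b|)$, which holds because $a$ and $b$ are floating-point numbers at distance $|b|$ and $|a|$ from $a+b$.

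The paper gives no proof of its own, only a citation. Simply citing the classical result would therefore have matched it; a sketch with false steps does not.

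\paragraph*{The missing reduction} It suffices to show that $a'=\mathrm{fl}(s-b')$ and $\delta_b=\mathrm{fl}(b-b')$ are exact. Then $a-a'=b'-(s-a)$ is the rounding error of the subtraction that produced $b'$, so $\delta_a$ is exact. Moreover $\delta_a+\delta_b=a+b-s$ is representable, hence $e=a+b-s$.

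\paragraph*{Case $e_a<e_b$} Take $b>0$ without loss of generality, so $|a|<b$ and $s\le 2b$.
\begin{itemize}
\item Since $|a+b-s|\le\frac12\ulp(s)\le 2^{1-p}b$, the value $b'=\mathrm{fl}(b-(a+b-s))$ lies in $[b/2,2b]$. Sterbenz then gives $\delta_b$ exact.
\item If $a\ge 0$, then $a<b$ gives $s\ge 2a$, hence $s/2\le b'\le s$. Sterbenz gives $a'$ exact.
\item If $a<0$ and $|a|\le s$, then $s\le b'\le 2s$, and again Sterbenz gives $a'$ exact.
\item If $a<0$ and $|a|>s$, monotonicity of rounding forces $b<2|a|$. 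Then $s=a+b$ exactly by Sterbenz, so $b'=b$ and $a'=a$.
\end{itemize}
Together with your $e_a\ge e_b$ case, this proves the lemma using only Lemma~\ref{lem:fts}, Sterbenz's lemma and the representability of addition errors.
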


\begin{lemma}[\FTS{} under the exponent condition]
\label{lem:fts}
If $a=0$ or $e_a \ge e_b$, then $s-a$ is exactly representable as a
floating-point number, so $\mathrm{fl}(s-a)=s-a$. Moreover $b-(s-a)=a+b-s$ is
the rounding error of $a+b$ and is representable as well, hence $e = a+b-s$,
that is, $s+e=a+b$ holds exactly \cite{Muller18}.
\end{lemma}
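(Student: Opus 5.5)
We prove Lemma~\ref{lem:fts} in two steps: first that $s-a$ is representable, then that the rounding error $b-(s-a)=a+b-s$ is representable. Throughout, $e_x$ denotes the exponent of $x$ in binary precision-$p$ floating point with round-to-nearest, so every nonzero $x$ is an integer multiple of $2^{e_x-p+1}$ with $|x|<2^{e_x+1}$. We ignore overflow and treat subnormals by the usual convention that they sit on the grid of the minimal exponent.

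\textbf{The case $a=0$.} Then $s=\mathrm{fl}(b)=b$, so $\mathrm{fl}(s-a)=s=b$ and $e=\mathrm{fl}(b-b)=0$. Hence $s+e=b=a+b$. The case $b=0$ is equally immediate, so from now on assume $a,b\neq0$ with $e_a\ge e_b$.

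\textbf{Step 1: $s-a$ is representable.} Let $g=2^{e_b-p+1}$. Since $e_a\ge e_b$, both $a$ and $b$ are multiples of $g$, so $a+b$ is too. The result $s$ lies on the floating-point grid near $a+b$; because $|a+b|\ge g$ or $a+b=0$, rounding never leaves the grid of spacing $g$, and $s$ is also a multiple of $g$. Hence $s-a$ is a multiple of $g$. We bound its magnitude by
\[
|s-a|\le|s-(a+b)|+|b|\le|b|+|b|,
\]
where the first term is at most $|b|$ because $a$ is itself a floating-point candidate at distance $|b|$ from $a+b$, so the nearest candidate $s$ is no farther. This gives $|s-a|\le 2|b|<2^{e_b+2}$. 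A multiple of $2^{e_b-p+1}$ of that size needs up to $p+1$ bits, so one more bit is required. If $|s-a|<2^{e_b+1}$, it fits in $p$ bits and is representable. In the boundary range $2^{e_b+1}\le|s-a|\le 2|b|$, we will show that $s-a$ is then a multiple of $2g$. The idea is that $|s-a|$ is this large only when $s$ has moved into a binade with exponent at least $e_b+1$, or $a$ itself has exponent greater than $e_b$; in either case both $s$ and $a$ lie on the coarser grid. The remaining subcase, $e_a=e_b$ with no carry, gives $|s-a|\le|b|<2^{e_b+1}$ directly. This is the standard Dekker argument as presented in \cite{Muller18}, and I would follow it.

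\textbf{Step 2: the error term is representable.} Step~1 gives $\mathrm{fl}(s-a)=s-a$, so the final operation computes $\mathrm{fl}(b-(s-a))=\mathrm{fl}(a+b-s)$. The rounding error of a single round-to-nearest addition is always exactly representable (this is the fact underlying Lemma~\ref{lem:ts}). Therefore $e=a+b-s$ exactly, and $s+e=a+b$.

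\textbf{Main obstacle.} The hard part is the boundary range in Step~1, where $|s-a|$ could need $p+1$ bits. It must be handled with the exponent hypothesis $e_a\ge e_b$ rather than $|a|\ge|b|$, because with $e_a=e_b$ and $|a|<|b|$ the magnitude bound alone is not enough. The case analysis has to split on whether $s$ changes binade relative to $a$.
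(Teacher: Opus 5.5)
Your outline, first $\mathrm{fl}(s-a)=s-a$ and then representability of the rounding error of $a+b$, is the paper's own. The paper states only these two facts and defers the first to \cite{Muller18}. Your $a=0$ case and Step~2 are fine. The one argument you do supply for the hard part of Step~1, however, does not work as written. When $e_a=e_b$ and $a,b$ have the same sign, $s$ does move up a binade, but this does not put $a$ on the grid of spacing $2g$. For $a=1+2^{1-p}$ and $b=2-2^{1-p}$ one gets $s=3$, a multiple of $2g$, while $a$ is an odd multiple of $g$. So ``in either case both $s$ and $a$ lie on the coarser grid'' fails in exactly the subcase you identified as the obstacle. Your bound $|s-a|\le 2|b|$ cannot rescue it, since $s-a$ is then only known to be a multiple of $g$. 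For $e_a>e_b$ you also still owe the fact that $s$, not just $a$, lies on the coarser grid.

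The repair is short.
\begin{itemize}
\item If $e_a=e_b$ and the signs differ, $|a+b|<2^{e_b}$ is a multiple of $g$, hence exact, so $s-a=b$.
\item If $e_a=e_b$ and the signs agree, $2^{e_b+1}\le|a+b|<2^{e_b+2}$, where the spacing is $2g$. Use $|s-(a+b)|\le g$ instead of $\le|b|$. Then $|s-a|\le|b|+g\le 2^{e_b+1}$, and a multiple of $g$ of at most that size is $kg$ with $|k|\le 2^p$, hence representable. In this subcase your boundary range is reached only at $|s-a|=2^{e_b+1}$.
\item If $e_a>e_b$, either $|a+b|<2^{e_b+1}$, so $a+b$ is exact and $s-a=b$, or $|s|\ge 2^{e_b+1}$ by monotonicity of rounding. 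In the latter case $s$ and $a$ are both multiples of $2g$, and $|s-a|\le 2|b|<2^{e_b+2}$ finishes it.
\end{itemize}
The same dichotomy (exact below $2^{e_b+1}$, coarser grid above) is also the correct reason that $s$ is a multiple of $g$; ``$|a+b|\ge g$'' is not.
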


\begin{proposition}[sum relation]
\label{prop:sum}
Let $S = \sum_{i} v_i$ be the exact sum of the input. If the hypothesis of
Lemma~\ref{lem:fts} holds at every \FTS{} of step~(2), then
\begin{equation}
S \;=\; \sum_{k=0}^{K-1} z_k \;+\; \sum_{j=1}^{n-K} \delta_j
\label{eq:sumpres}
\end{equation}
holds exactly, where $\delta_j$ is the rounding error of the $j$-th addition of
the tail fold. In particular, for $n=K$ the identity $\sum_k z_k = S$ is exact.
\end{proposition}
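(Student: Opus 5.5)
The plan is to track the invariant ``sum of the current array entries plus accumulated fold errors equals $S$'' through the three stages of Algorithm~\ref{alg:renormpair}. I write $v^{(0)}$ for the input array and $v^{(1)},v^{(2)},v^{(3)}$ for the arrays after steps (0), (1) and (2). Throughout, I use the convention that each stage overwrites entries in place.

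\emph{Step (0).} Each \TS{} call replaces the pair $(v_i,v_{i+1})$ by $(s,e)$ with $s+e=v_i+v_{i+1}$. By Lemma~\ref{lem:ts} this holds exactly and unconditionally. All other entries are unchanged, so the full sum $\sum_{i=0}^{n-1} v_i$ is invariant under each call. By induction over the $n-1$ calls, $\sum_i v^{(1)}_i = S$.

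\emph{Step (1).} If $n=K$, the guard makes this step empty, so the array is unchanged and no $\delta_j$ terms appear. If $n>K$, I index the plain additions in execution order $j=1,\dots,n-K$: the additions $t\leftarrow \mathrm{fl}(t+v_i)$ for $i=n-2,\dots,K$ (there are $n-K-1$ of them), followed by the final $v_{K-1}\leftarrow\mathrm{fl}(v_{K-1}+t)$. For each addition $\mathrm{fl}(x+y)$ I define $\delta_j := (x+y)-\mathrm{fl}(x+y)$, which is an exact real number by definition; no representability is needed. Telescoping the $n-K-1$ tail additions gives
\[
t_{\mathrm{final}} = \sum_{i=K}^{n-1} v^{(1)}_i - \sum_{j=1}^{n-K-1}\delta_j .
\]
The final addition then gives
\[
v^{(2)}_{K-1} = v^{(1)}_{K-1} + t_{\mathrm{final}} - \delta_{n-K}.
\]
Entries $0,\dots,K-2$ are untouched. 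Hence $\sum_{k=0}^{K-1} v^{(2)}_k + \sum_{j=1}^{n-K}\delta_j = \sum_{i=0}^{n-1} v^{(1)}_i = S$. The entries $v_K,\dots,v_{n-1}$ are discarded from this point on, which is accounted for by the fact that they have been absorbed into $t$.

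\emph{Step (2).} Each \FTS{} acts only on entries of the retained block $0,\dots,K-1$. By hypothesis, Lemma~\ref{lem:fts} applies at every such gate, giving $s+e=v_i+v_{i+1}$ exactly. So $\sum_{k=0}^{K-1}v_k$ is invariant over all $r(K-1)$ calls. The $\delta_j$ are fixed numbers, unaffected by this stage. Setting $z_k=v^{(3)}_k$ yields \eqref{eq:sumpres}, and the case $n=K$ follows because the $\delta$-sum is empty.

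I expect no real obstacle here. The only delicate points are bookkeeping. First, the $n=K$ guard must be used, since otherwise Remark~\ref{rem:guard} shows the sum doubles $v_{K-1}$. Second, the sign convention and the count of the $\delta_j$ must come out to exactly $n-K$ terms. Third, the hypothesis is applied to the operand values actually present at each gate, not to the original input. I would also note explicitly that overflow is excluded, as in Lemma~\ref{lem:ts}.
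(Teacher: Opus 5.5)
Your proposal is correct and follows essentially the same route as the paper's proof. Both track the real-valued total through the \TS{} sweep and telescope the $n-K$ tail-fold additions, with $\delta_j$ defined as the exact sum minus the rounded sum. Both then invoke Lemma~\ref{lem:fts} to keep $\sum_{k<K}v_k$ invariant in step~(2), and use the guard to handle $n=K$.
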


\begin{IEEEproof}
Both \TS{} and \FTS{} replace an adjacent pair $(v_i,v_{i+1})$ of the array by
$(s,e)$, so it suffices to follow the real-valued total $T(v)=\sum_i v_i$.
By Lemma~\ref{lem:ts} each replacement of step~(0) leaves $T$ unchanged, so the
array $w$ obtained right after it satisfies $\sum_i w_i = S$; no hypothesis is
needed, so this holds however much the input overlaps. In step~(1),
$t_0=w_{n-1}$ and
$t_j=\mathrm{fl}(t_{j-1}+w_{n-1-j})=t_{j-1}+w_{n-1-j}-\delta_j$ give, by
induction, $t_{n-1-K}=\sum_{i=K}^{n-1}w_i-\sum_{j=1}^{n-1-K}\delta_j$. Adding
the final addition $u_{K-1}=w_{K-1}+t_{n-1-K}-\delta_{n-K}$ and the untouched
$u_k=w_k\ (k\le K-2)$ yields
$\sum_{k<K}u_k = S-\sum_{j\le n-K}\delta_j$; the number of additions is exactly
$n-K$, matching the second term of \eqref{eq:cost}. Under the hypothesis of
Lemma~\ref{lem:fts}, step~(2) leaves $\sum_{k<K}v_k$ invariant. For $n=K$,
step~(1) is empty by the guard of Remark~\ref{rem:guard}, so no $\delta_j$
exists.
\end{IEEEproof}

The error analysis therefore reduces to bounding the $n-K$ terms $\delta_j$;
steps~(0) and~(2) lose nothing as long as the hypothesis of
Lemma~\ref{lem:fts} holds.

\begin{remark}[one sweep does not guarantee the hypothesis]
\label{rem:notorder}
The \VS{} sweep of step~(0) does not in general sort the components by
magnitude, and it does not even guarantee that the leading word $v_0$ is the
largest: in binary64 the input $(-1,\,1,\,2^{-53})$ becomes $(0,\,0,\,2^{-53})$
after the sweep. In particular it does not guarantee the exponent condition
required by the subsequent \FTS{} chain, so the hypothesis of
Lemma~\ref{lem:fts} does not hold in general and inputs exist for which the
assumption of Proposition~\ref{prop:sum} fails. For a case in which the sum
itself is lost, take $K=n=3$ and
$(v_0,v_1,v_2)=(2^{-100},\,2^{53},\,1)$. After the sweep the
array is $(2^{53},\,2^{-100},\,1)$, which is not sorted by magnitude, and the
first $\FTS(2^{-100},1)$ of step~(2) violates the hypothesis because
$e_a<e_b$. Indeed $s=\mathrm{fl}(2^{-100}+1)=1$ and $\mathrm{fl}(s-a)=1$, so the
error term is $0$ and $2^{-100}$ vanishes. The output $(2^{53},1,0)$ is
non-overlapping but does not preserve the exact sum $2^{53}+1+2^{-100}$.
\textbf{The non-overlap test does not detect this error.} The inputs we target
are those generated by pair-arithmetic networks; for input~[II] of
Section~\ref{sec:num}, the hypothesis of Proposition~\ref{prop:sum} held at all
$600{,}000$ \FTS{} calls and error-freeness never broke. This is a measurement,
not a proof covering every input a pair-arithmetic network can produce.
\end{remark}

The quantity we want, $\tau = \sum_i v_i$, is a sum of finitely many
floating-point numbers, not an irrational number, and is exactly represented.
An obstruction isomorphic to the Table Maker's Dilemma \cite{Muller18} of
correctly rounded elementary functions nevertheless appears.

Obtaining $z_0 = \RN(\tau)$ is easy; the lowest word is the problem. Rounding
componentwise correctly means requiring
$z_{K-1} = \RN(\tau - \sum_{k<K-1}z_k)$, and deciding this requires knowing on
which side of the rounding boundary $\tfrac12\ulp(v_{K-1})$ the discarded tail
lies. From non-overlap $|v_{j+1}|\le\tfrac12\ulp(v_j)$ and
$\ulp(x)\le 2u|x|$ for normal numbers we obtain $|v_{j+1}|\le u|v_j|$ (our
target inputs contain no underflow, so subnormals do not enter this
derivation). All that follows for the tail is therefore the geometric bound
\begin{equation}
|v_K + v_{K+1} + \cdots + v_{n-1}| \;\le\;
   \frac{\tfrac12\ulp(v_{K-1})}{1-u},
\label{eq:half}
\end{equation}
whose right-hand side \emph{exceeds} the rounding boundary
$\tfrac12\ulp(v_{K-1})$ by the factor $1/(1-u)$. Non-overlap thus confines the
tail to a neighbourhood of the boundary but \textbf{does not decide which side
of it the tail is on}: exactly the one bit needed for correct rounding is
missing.

\begin{remark}
One cannot conclude $|v_K+\cdots+v_{n-1}|\le\tfrac12\ulp(v_{K-1})$ for the whole
tail from pairwise non-overlap. The values $v_{K-1}=1$, $v_K=2^{-53}$ and
$v_{K+1}=2^{-106}$ satisfy pairwise non-overlap with equality, yet
$v_K+v_{K+1}>2^{-53}=\tfrac12\ulp(1)$. The factor $1/(1-u)$ in
\eqref{eq:half} is needed to absorb this.
\end{remark}

\paragraph*{Counterexample}
We give a concrete counterexample for $K=2$, $n=10$. The input is the exact
$10$-word expansion, by error-free transformations, of $\tau = xy+c$ with
non-overlapping double-word $x$, $y$ and $c$.

\begin{center}\scriptsize
\begin{tabular}{@{}r@{\ }l@{\ }l@{}}
\toprule
$x$ & $=($ & \texttt{-0x1.cdc2df1bbe8b7p+20}, \\
    &      & \ \texttt{0x1.4e86b2fb73eeap-282}$)$ \\
$y$ & $=($ & \ \texttt{0x1.9eec2b8d61b11p+12}, \\
    &      & \ \texttt{0x1.e6a17ec1b3876p-286}$)$ \\
$c$ & $=($ & \ \texttt{0x1.76359181a5defp+33}, \\
    &      & \ \texttt{0x1.42d23dfff824ep-21}$)$ \\
\midrule
\multicolumn{3}{@{}l@{}}{after 3, 4, 6, 12 or 24 sweeps, all give:} \\
$z_1$ & $=$ & \ \texttt{0x1.c9c513651457ap-320} \\
\multicolumn{3}{@{}l@{}}{exact $\RN(\tau - z_0)$:} \\
$z_1$ & $=$ & \ \texttt{0x1.c9c513651457bp-320} \\
\bottomrule
\end{tabular}
\end{center}

For this input, even after $24$ sweeps have fully restored pairwise
non-overlap,
\[
 \frac{|v_2+\cdots+v_9|}{\tfrac12\ulp(v_1)} \;=\; 1 + 4.95\times10^{-59},
\]
so the tail exceeds the rounding boundary by an \emph{extremely} small amount.
The excess is of order $2^{-194}$, far inside the margin
$u/(1-u)\approx1.1\times10^{-16}$ allowed by \eqref{eq:half}. In other words,
\textbf{non-overlap cannot decide this one bit}. Adding sweeps always yields a
bound of the form \eqref{eq:half}, never the information of which side of $1$
the ratio lies on. Since the sweep count is precisely the degree of freedom of
a fixed-trip-count design, there is no solution in the direction of increasing
it.

\begin{proposition}[lowest word of \RP{}]
\label{prop:notcr}
\RP{} does not in general guarantee
$z_{K-1} = \RN(\tau - \sum_{k<K-1} z_k)$ for the lowest word $z_{K-1}$.
\end{proposition}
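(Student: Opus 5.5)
The statement is a negative one, so the plan is to exhibit a single concrete input on which \RP{} returns a lowest word $z_{K-1}\neq\RN(\tau-\sum_{k<K-1}z_k)$. I would use the $K=2$, $n=10$ counterexample displayed above. Its input is the exact expansion of $\tau=xy+c$ produced by error-free transformations: \TS{} for the additions and an exact product split (FMA-based TwoProd) for the multiplications. Because every word comes from an error-free transformation, the ten words sum to $\tau$ exactly, by Lemma~\ref{lem:ts} and the exactness of TwoProd. So $\tau$ is a well-defined dyadic rational and can be evaluated exactly, for instance in rational arithmetic or with a sufficiently wide binary format.

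The argument has three steps. First, compute the output of Algorithm~\ref{alg:renormpair} on this input, for $r$ as in \eqref{eq:rchoice} and also for larger sweep counts. This is a deterministic floating-point computation, so its result is a fixed pair $(z_0,z_1)$; the table gives $z_1=\texttt{0x1.c9c513651457ap-320}$. Second, compute $\tau-z_0$ exactly and round it once to binary64, which gives $\texttt{0x1.c9c513651457bp-320}$. Third, observe that the two values differ in the last bit of the significand. Since $\RN$ is single-valued, $z_1\neq\RN(\tau-z_0)$, and this single instance suffices for ``not in general''.

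To make the failure look structural rather than accidental, I would add the explanation behind it. After the sweep, the discarded tail lies within the window \eqref{eq:half} around $\tfrac12\ulp(v_1)$. The measured ratio $1+4.95\times10^{-59}$ shows the tail just above the boundary. The tail fold (step~(1)) rounds the tail into $v_1$ and therefore loses the excess of order $2^{-194}$; by Proposition~\ref{prop:sum}, this excess is exactly the accumulated $\delta_j$. Once it is gone, the rounding of the final sum falls on the wrong side of the tie boundary.

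The main obstacle is certifying the second step rigorously. Checking that $\tau-z_0$ lies strictly above the midpoint, and not exactly on it, requires exact arithmetic over an exponent range of about $2^{+54}$ down to $2^{-600}$. I would do this with exact rational arithmetic on the ten dyadic words, or equivalently with an MPFR computation at precision well above the roughly $650$ bits of span, and then compare the residual against the midpoint exactly.
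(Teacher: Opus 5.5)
Your proposal is correct and is essentially the paper's proof: the paper also settles the proposition with the single $K=2$, $n=10$ instance $\tau=xy+c$, whose computed $z_1$ ends in \texttt{...7a} while the exactly computed $\RN(\tau-z_0)$ ends in \texttt{...7b}, and your plan to certify $\tau-z_0$ as strictly past the midpoint by exact arithmetic matches the paper's 4096-bit MPFR check. One small slip in your optional explanatory paragraph: by Proposition~\ref{prop:sum} the full sum $\sum_{j=1}^{n-K}\delta_j$ equals $\tau-z_0-z_1$, which is about $\tfrac12\ulp(z_1)$, so the tiny excess is only the error of the additions that form $t$, not the whole $\sum_j\delta_j$; this does not affect the proof.
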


Since the counterexample above yields the same wrong $z_1$ irrespective of $r$
and of the sweep count, that single instance proves the proposition.

\begin{remark}
The same difficulty is expected to extend to schemes in general that perform a
fixed number of \TS{}/\FTS{} sweeps and retain no sticky information about the
discarded tail, but we have not proved impossibility for that whole family, let
alone for the branch-free fixed-trip-count model of computation as such. With
$n$ fixed, a construction that accumulates exactly into a sufficiently long
fixed-size accumulator and rounds with a fixed number of bit operations is
conceivable in principle. The analogy with the Table Maker's Dilemma is an
explanation, not a proof.
\end{remark}

Representative remedies are round-to-odd \cite{BoldoMelquiond08}, which
preserves a sticky bit when the tail is discarded, and a data-dependent
iteration such as \textsc{NearSum} \cite{ORO05}, the summation counterpart of
Ziv's strategy. The former is absent from the binary64 hardware rounding modes
and the latter has an input-dependent iteration count, so neither is compatible
with the sweep-based branch-free fixed-trip-count constraint adopted here.

\begin{remark}
\RP{} therefore does not claim correct rounding of the lowest word. Its
practical goal is to fold the discarded tail into the last retained word and
restore non-overlap; the latter is evaluated experimentally in
Section~\ref{sec:num}. For $n>K$, faithfulness is not guaranteed either unless a
bound on $\sum_j\delta_j$ is supplied. For the purpose of pair arithmetic,
namely resolving the overlap enough for an iterative solver to converge, this is
sufficient.
\end{remark}

\section{Numerical Verification}
\label{sec:num}

\subsection{Generated data and verification method}

Evaluating a renormalizer requires feeding it the ``overlapping $n$ words'' that
pair arithmetic can actually produce. We generated the following two families of
random input.

\begin{itemize}
\item \emph{[I] Synthetic model}: starting from the exponent of the leading
  word, $n$ words are produced with the exponent lowered by a constant gap $g$
  at each step. At $g=53$ the words are nearly non-overlapping, and the smaller
  $g$ is, the deeper the overlap. Sweeping $g\in\{53,42,31,20\}$ probes
  directly how deep an overlap a renormalizer tolerates.
\item \emph{[II] Actual pair-arithmetic output}: the exact expansion of
  $\tau = xy+c$ by error-free transformations, run through exactly one full
  \TS{} sweep, keeping the leading $n$ words. This is pair arithmetic itself
  with the trailing renormalization stage removed, and gives the distribution of
  overlap that a renormalizer really receives.
\end{itemize}

For each sample we checked the following three items against exact arithmetic.
\emph{Non-overlap} counts as a violation any case where
$\max_k |z_{k+1}|/(\tfrac12\ulp(z_k))$ (the ``non-overlap degree'' below)
exceeds $1$. \emph{Sum preservation} is measured by
$\rho=|\sum_k z_k - S|/(u^K|S|)$ against the exact input sum $S=\sum_i v_i$.
$S$ is built with $4096$-bit MPFR arithmetic, and we verify that the ternary
value of every operation is $0$, i.e.\ that no rounding occurred anywhere, so
$S$ is exact rather than approximate. In addition, for every gate of the \FTS{}
chain we separately recorded whether the exponent condition $e_a \ge e_b$ and
the magnitude condition $|a|\ge|b|$ hold, and whether $s+e=a+b$ holds exactly
(error-freeness). Counting the two conditions separately is the point, since
Proposition~\ref{prop:sum} requires only the exponent condition.

Each condition was run $200{,}000$ times ($50{,}000$ for the cost-equivalent
comparison and $200$ for the accumulation loop). The environment is Ubuntu
24.04 on aarch64 (NVIDIA GB10, Cortex-X925 $\times$ 10 $+$ Cortex-A725 $\times$
10), gcc 13.3.0, CUDA 13.0 and MPFR 4.2.2 / GMP 6.3.0; CPU measurements are
pinned to the ten X925 cores.

\subsection{Configuration needed to restore non-overlap}

\begin{table}[t]
\centering
\caption{Non-overlap violations per renormalizer (input~[I], $n=K+1$, worst
case over $g\in\{53,42,31,20\}$, out of $200{,}000$ trials).}
\label{tab:quasi}
\scriptsize
\setlength{\tabcolsep}{3.5pt}
\resizebox{\columnwidth}{!}{%
\begin{tabular}{lrrrrrr}
\toprule
Renormalizer & \multicolumn{2}{c}{$K=2$} & \multicolumn{2}{c}{$K=3$}
         & \multicolumn{2}{c}{$K=4$} \\
\cmidrule(lr){2-3}\cmidrule(lr){4-5}\cmidrule(lr){6-7}
 & flops & viol. & flops & viol. & flops & viol. \\
\midrule
none                                  & 0  & 200000 & 0  & 200000 & 0  & 200000 \\
\VSK{} (Mukunoki--Ozaki)              & 6  & 0      & 12 & 200000 & 18 & 200000 \\
\RP{} without the \VS{} sweep         & 4  & 0      & 7  & 200000 & 10 & 200000 \\
$\RP{}, r=1$                          & 16 & 0      & 25 & \textbf{0} & 34 & 200000 \\
$r=1$, chain replaced by \VSK{}-style & 19 & 0      & 31 & 0      & 43 & 199990 \\
$\mathbf{\RP{},\ r=2}$                & 19 & 0      & 31 & 0      & 43 & \textbf{0} \\
$r=1$, two \VS{} sweeps               & 28 & 0      & 43 & 0      & 58 & 0 \\
\midrule
\multicolumn{7}{l}{\emph{Reference}: $n=K$ with the step-(1) guard removed
(Remark~\ref{rem:guard})} \\
\quad non-overlap violations          & 3  & 0      & 6  & 200000 & 9  & 200000 \\
\quad $\log_2 \rho$ (sum error)       &    & $87$   &    & $120$  &    & $153$ \\
\quad the same, guard in place        &    & exact  &    & exact  &    & exact \\
\bottomrule
\end{tabular}}
\end{table}

Table~\ref{tab:quasi} shows three things; all of them are facts observed over
the tested input set, not proofs. First, prefixing the \VS{} sweep is
necessary: without it every trial fails for $K\ge3$. Second, one sweep plus one
round of the \FTS{} chain suffices only up to $K\le3$; for $K=4$ non-overlap is
not restored. Third, the cheapest fix for $K=4$ is to run the \FTS{} chain
twice. Spending the same $6(K-1)$ flops on one round of a \VSK{}-style \TS{}
chain fails in $199{,}990/200{,}000$ cases, whereas two rounds of the \FTS{}
chain fail in $0/200{,}000$. Using two \VS{} sweeps also fixes it, but costs
15 flops more.

\subsection{In what sense the \FTS{} hypothesis holds}

The distinction between the exponent and magnitude conditions made in
Section~\ref{sec:guarantee} matters here. Counting the two separately on
input~[II] ($n=K+1$), for QW ($K=4$) the magnitude condition $|a|\ge|b|$ fails
$65{,}388$ times out of $600{,}000$ even with the \VS{} sweep prefixed.
Error-freeness nevertheless never breaks, because the failures are of the
Sterbenz type with $e_a=e_b$, for which the exponent condition still holds. In
other words, over input~[II] one sweep did establish the hypothesis needed for
the sum relation. This is a fact about the measured inputs, not a guarantee for
arbitrary input (Remark~\ref{rem:notorder}). The failure of $r=1$ at $K=4$ is
not caused by the hypothesis failing but because one bottom-up round of the
\FTS{} chain does not propagate the correction all the way up; two rounds
resolve it (non-overlap violations $458 \to 0$). Dropping the \VS{} sweep, on
the other hand, not only raises the magnitude-condition violations to
$81{,}908/400{,}000$ for TW and $89{,}930/600{,}000$ for QW, but breaks
error-freeness itself about $18{,}200$ times for both TW and QW, so the sum
relation of Proposition~\ref{prop:sum} no longer holds (non-overlap violations
reach $15{,}235$ for TW and $98{,}609$ for QW).

Retaining one extra word barely changes the violation count ($386$ versus $458$
out of $200{,}000$ for QW with $r=1$), and the error with respect to $\tau$ was
identical for all renormalizers. Renormalization only restores the structure;
the information already discarded by pair arithmetic does not come back.

\subsection{Cost-equivalent comparison with \VSK{}}
\label{sec:vscmp}

\begin{table}[t]
\centering
\caption{Non-overlap violations ($n=K$, $50{,}000$ trials each). $g$ is the
exponent gap of the synthetic model; smaller means deeper overlap.}
\label{tab:vscmp}
\scriptsize
\setlength{\tabcolsep}{4pt}
\resizebox{\columnwidth}{!}{%
\begin{tabular}{lrrrrr}
\toprule
Renormalizer & flops & input~[II] & $g{=}42$ & $g{=}31$ & $g{=}20$ \\
\midrule
\multicolumn{6}{l}{\emph{$K=3$ (QTW of Mukunoki--Ozaki)}} \\
\quad \textsc{VecSum3} (Mukunoki--Ozaki) & 12 & $9{,}345$ & 0 & 63 & $50{,}000$ \\
\quad $\mathbf{\RP{},\ r=1}$ (proposed) & \textbf{18} & \textbf{0} & \textbf{0}
      & \textbf{0} & \textbf{0} \\
\quad \textsc{VecSum3} $\times 2$    & 24 & 0 & 0 & 0 & 0 \\
\quad \VS{} $\to$ \textsc{VecSum3}   & 24 & 0 & 0 & 0 & 0 \\
\midrule
\multicolumn{6}{l}{\emph{$K=4$ (QQW)}} \\
\quad \textsc{VecSum4} (same form)   & 18 & $22{,}257$ & 1 & $50{,}000$ & $50{,}000$ \\
\quad $\RP{},\ r=1$                  & 27 & 69 & $49{,}943$ & $49{,}019$ & 0 \\
\quad \textsc{VecSum4} $\times 2$    & 36 & 0 & 0 & 0 & 273 \\
\quad \VS{} $\to$ \textsc{VecSum4}   & 36 & $7{,}482$ & 0 & 467 & $49{,}997$ \\
\quad $\mathbf{\RP{},\ r=2}$ (proposed) & \textbf{36} & \textbf{0} & \textbf{0}
      & \textbf{0} & \textbf{0} \\
\bottomrule
\end{tabular}}
\end{table}

Table~\ref{tab:vscmp} lists the violation counts on the same input sets with
the cost matched. Two things follow. For $K=3$ the proposal is $25\,\%$
cheaper: \textsc{VecSum3} alone (12 flops) fails on $18.7\,\%$ of input~[II]
and cannot restore non-overlap at all under deep overlap, and reaching zero
everywhere with \textsc{VecSum3} costs 24 flops, whereas the proposal reaches
zero everywhere with 18. For $K=4$, at equal cost only the proposal reaches
zero everywhere: given 36 flops, \textsc{VecSum4} $\times 2$ still fails 273
times under the deepest overlap and \VS{} $\to$ \textsc{VecSum4} fails
$49{,}997$ times, while $\RP{}, r=2$ fails in none.

A concrete counterexample is also available. For the sample of input~[II] with
$K=3$
\begin{center}\scriptsize
\resizebox{\columnwidth}{!}{$
v = (-\texttt{0x1.b6546d89bfd6ap+3},\ -\texttt{0x1.d9364cp-52},\
-\texttt{0x1.f7a8ced70c4fep-51})$}
\end{center}
the output of \textsc{VecSum3} has non-overlap degree $1.4458 > 1$: the second
word exceeds $\tfrac12\ulp$ of the first by $44\,\%$, so the three words do not
hold $159$ bits. The proposal returns non-overlap degree $0.5542$ on the same
input. Both preserve the sum exactly; only the quality of the representation
differs.

\subsection{Accumulation loop}

With the same placement that Mukunoki and Ozaki use in CG, namely one
renormalization right after the AXPY, we chained the pair multiply--add that
generates input~[II] $M$ times, renormalizing after each, and measured the
non-overlap degree of the output (worst case over $200$ trials each). With
\VSK{} the overlap grows without bound as the loop proceeds: for $K=3$ and
$M=10,100,1000$ it reaches $26.3$, $244$ and $853$, and for $K=4$ with $M=1000$
it reaches $1.58\times10^6$, i.e.\ more than three words' worth of significand
capacity has been lost to overlap. The proposal keeps $1.00$ for every
$M$. The mechanism behind the ``no convergence without normalization'' of
\cite{MukunokiOzaki25} thus survives, in weakened form, even \emph{with}
renormalization in place.

\section{Benchmark Tests}
\label{sec:bench}

\subsection{Measurement setup}

\paragraph*{Solvers and word count}
CG follows Algorithm~1 of \cite{MukunokiOzaki25} as is. The coefficient matrix
$A$ and the right-hand side $\vb$ are in the base format, while the solution
$\vx$ and every vector and scalar inside CG are $K$-word. BiCGStab (van der
Vorst) was added as a representative product-type Krylov method. It performs
about twice the arithmetic of CG and has two residual-like vectors $\vr$ and
$\vs$, so the placement corresponding to ``renormalize $\vr$ once right after
the AXPY'' of Mukunoki and Ozaki was taken to be ``renormalize $\vs$ and $\vr$
once per iteration each.'' No preconditioner is used. The base formats are
binary64 and binary32 and the word counts are $K=2,3,4$ (QDW/QTW/QQW); in
binary32 these provide nominal significand capacities of $48/72/96$ bits, so
$K=3$ already exceeds plain FP64.

\paragraph*{Test problems}
In addition to synthetic problems (where $A\vx^\ast=\vb$ is exact by
construction), $25$ matrices were taken from SuiteSparse \cite{Davis11} and
rounded to a per-row quantum following Ozaki and Ogita \cite{OzakiOgita17}, so
that each row sum is exact in the base format. We then screened them by whether
pair arithmetic actually solves them without a preconditioner (attained
relative error $\varepsilon^\ast<10^{-20}$, or $10^{-10}$ in binary32).
For binary64/binary32, $11$/$12$ matrices passed for BiCGStab and $4$/$5$ for
CG. Adding \texttt{lap2d:128} and \texttt{lap3d:40} gives $13$/$14$ problems for
BiCGStab and $6$/$7$ for CG. The $13$ matrices that passed in neither base
format were structural-analysis problems that did not converge without a
preconditioner under our test conditions, and were therefore excluded.

\paragraph*{Measured quantities}
$\varepsilon^\ast$ is the minimum of $\|\vx_k-\vx^\ast\|_2/\|\vx^\ast\|_2$ over
the iterations, ovl is the worst non-overlap degree of $\vr$, and bad is the
fraction of elements still violating non-overlap right after renormalization.
Times are best-of runs over $100$ iterations, and all kernels were checked
against $4096$-bit MPFR.

\subsection{In successful trials the words agree with \VSK{}}

What CG actually uses is the case $n=K$, where both renormalizers preserve the
sum exactly (Proposition~\ref{prop:sum}, with no $\delta_j$). However,
\emph{a non-overlapping $K$-word representation of a given exact sum is not
unique}: for $S=1+2^{-53}$, both $(1,\,2^{-53})$ and $(1+2^{-52},\,-2^{-53})$
are non-overlapping and sum to $S$. One therefore cannot argue ``the same sum
implies the same words''; agreement has to be checked experimentally.
Generating the input shape the CG residual update actually produces (one pair
addition of two non-overlapping $K$-word numbers) two million times per
setting, both renormalizers restored non-overlap in every case for $K=3,4$ in
binary64 and the outputs agreed bitwise in all of them. In binary32, \VSK{}
failed in $1$ case for $K=3$ and $15$ for $K=4$ and \RP{} in $1$ case, but the
outputs agreed in every remaining case where both succeeded. What follows rests
on this \emph{observation}.

Identical outputs imply identical trajectories, so the iteration counts and
attained accuracies agree in CG. For \texttt{lap2d:128}, binary64 and QTW,
\VSK{}, \RP{}$r{=}1$ and \RP{}$r{=}2$ all need $465$ iterations to reach
$\varepsilon^\ast\le10^{-32}$ and all attain
$\varepsilon^\ast=5.92\times10^{-46}$.

\subsection{Differences appear only where \VSK{} breaks}

\begin{table}[t]
\centering
\caption{Summary of the whole sweep, and the cases with the largest
differences. The population is $2$ base formats $\times$ $2$ solvers $\times$
problems $\times$ $K(=2,3,4)$, i.e.\ $120$ cells, times $2$ renormalization
placements, i.e.\ $240$ pairs.}
\label{tab:sweep}
\scriptsize
\setlength{\tabcolsep}{4pt}
\begin{tabular}{lr}
\toprule
 & pairs \\
\midrule
neither converged & $103$ \\
only \VSK{} converged & $2$ \\
only \RP{} converged & $0$ \\
both converged, identical iterations and $\varepsilon^\ast$ & $\mathbf{97}$ \\
both converged, \RP{} better & $22$ \\
both converged, \RP{} worse & $16$ \\
\bottomrule
\end{tabular}
\\[6pt]
\resizebox{\columnwidth}{!}{%
\begin{tabular}{llrrrr}
\toprule
Problem (BiCGStab, binary64) & $K$ & \multicolumn{2}{c}{\VSK{}}
 & \multicolumn{2}{c}{\RP{}} \\
\cmidrule(lr){3-4}\cmidrule(lr){5-6}
 & & iter. & $\varepsilon^\ast$ & iter. & $\varepsilon^\ast$ \\
\midrule
\texttt{raefsky2}  & 3 & $2093$ & $3.85\times10^{-25}$
                   & $\mathbf{1270}$ & $\mathbf{1.02\times10^{-33}}$ \\
\texttt{lap2d:128} & 3 & $570$ & $2.86\times10^{-27}$
                   & $\mathbf{555}$ & $\mathbf{1.73\times10^{-32}}$ \\
\texttt{jnlbrng1}  & 3 & $326$ & $2.43\times10^{-40}$
                   & $\mathbf{298}$ & $2.34\times10^{-40}$ \\
\bottomrule
\end{tabular}}
\end{table}

Table~\ref{tab:sweep} summarizes the whole sweep. Convergence differed in only
$2$ pairs, and in both of them it was \VSK{} that converged: \textbf{a stronger
renormalizer does not increase the number of solvable problems.} Most of the
$38$ pairs that differ do so in the last one or two digits, but some cases
matter a great deal. The win--loss bias itself is not statistically significant
(sign test, $p=0.42$), and since the same matrix recurs with different $K$,
base format and placement, whether the $38$ pairs may be treated as independent
samples is also open to question; what follows should be read as effect sizes
and individual cases. Structurally the advantage of the proposal is consistent:
of the $240$ rows of the sweep, \VSK{} produced a non-overlap violation in $66$
and \RP{} in $37$. In binary32, \VSK{} breaks even at $K=3$
(\texttt{lap2d:128}, BiCGStab: ovl-out $=40.4$, violation rate $0.0002\,\%$,
against $2.00$ and $0\,\%$ for \RP{}); at $K=4$ the ovl-out of \VSK{} is $10.2$
against $1.00$ for \RP{}$r{=}2$.

\begin{observation}
\label{obs:input}
For the CG residual update alone, the input reaching the renormalizer is an
overlap of about $26$ bits with the ordering preserved, which stays within the
range \VSK{} handles. The \VS{} sweep prefix and the tail fold of \RP{} pay off
only where the ordering of the input breaks down: binary32, and BiCGStab, where
two residual-like vectors make the cancellation deeper.
\end{observation}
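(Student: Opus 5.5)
Observation~\ref{obs:input} is partly structural and partly empirical, so I would split it into a checkable structural claim and a measurement. The structural claim has two parts. First, the words reaching the renormalizer after one pair addition of two non-overlapping $K$-word numbers remain ordered by exponent, $e_{w_0}\ge e_{w_1}\ge\cdots\ge e_{w_{K-1}}$. Second, each overlap $|w_{k+1}|/(\tfrac12\ulp(w_k))$ is bounded by a fixed power of two, of order $2^{26}$ in binary64. The empirical part is to confirm that this bound is attained in practice, and not merely allowed.

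For the structural part I would write out the pair addition componentwise. With $(s_k,e_k)=\TS(x_k,y_k)$, the $k$-th output word is $w_k=\mathrm{fl}(s_k+e_{k-1})$, with $w_0=s_0$. The input bounds $|x_{k+1}|\le u|x_k|$ and $|y_{k+1}|\le u|y_k|$ give $|s_k|\lesssim u(|x_{k-1}|+|y_{k-1}|)$. Lemma~\ref{lem:ts} gives $|e_{k-1}|\le u|s_{k-1}|$. So in the absence of cancellation in $s_{k-1}$, both summands of $w_k$ lie at least about $p$ bits below $w_{k-1}$.

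The overlap then comes only from partial cancellation in the leading sums $s_k$. In the CG residual update $\vr\leftarrow\vr-\alpha\vq$ this cancellation is moderate: the two operands agree to roughly half the working precision. That is where I would try to derive the $\approx p/2\approx26$-bit figure, by relating it to the convergence rate per iteration. I expect this derivation to be the main obstacle. The bound depends on the data of the iteration, not only on the arithmetic, so I would likely fall back to stating it as a measured distribution of exponent gaps (histogram of $e_{w_k}-e_{w_{k+1}}$ over CG runs) rather than a proof.

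Given ordering plus a bounded overlap of $d<p$ bits, I would then argue that one top-down \TS{} chain of \VSK{} restores \eqref{eq:nonoverlap}. Each $\TS(w_k,w_{k+1})$ with $e_{w_k}\ge e_{w_{k+1}}$ and gap $p-d>0$ leaves a remainder below $\tfrac12\ulp$ of the new leading word. It perturbs the next pair only by a correction that is again ordered, so induction over $k$ closes. This also matches the bitwise agreement reported in the previous subsection.

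Finally, for the converse half of the observation, I would exhibit the mechanism by which ordering breaks. In binary32, $p=24$, so a $26$-bit overlap already exceeds $p$ and the gap argument above fails. In BiCGStab, the second residual-like vector $\vs$ produces deeper cancellation in $s_k$, giving $e_{w_{k+1}}>e_{w_k}$; the counterexample of Section~\ref{sec:vscmp} is of exactly this type. In those regimes the \VS{} prefix of \RP{} is what re-establishes the exponent condition of Lemma~\ref{lem:fts}. This is consistent with the violation counts in Table~\ref{tab:sweep}.
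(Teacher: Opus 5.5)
\textbf{There is a genuine gap.} Your key structural step is false. You claim that, given exponent ordering and a per-pair overlap of $d<p$ bits, one top-down \TS{} chain of \VSK{} restores \eqref{eq:nonoverlap}, by induction over $k$. The induction does not close.
\begin{itemize}
\item After $(z_0,c)=\TS(w_0,w_1)$ you have $|c|\le\tfrac12\ulp(z_0)$. The next step then replaces $c$ by $z_1=\RN(c+w_2)$, which can cross that bound, so the next step destroys the invariant just established for $(z_0,z_1)$.
\item $z_1$ is essentially a rounding error and can be far smaller than $w_1$. Then $\tfrac12\ulp(z_1)$ need not dominate $w_3$, and overlaps compound along the chain.
\end{itemize}
A three-word binary64 instance shows this. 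The input $(w_0,w_1,w_2)=(1,\,2^{-53},\,2^{-104})$ is exponent-ordered with at most $2$ bits of overlap. Yet \textsc{VecSum3} returns $(1,\,2^{-53}+2^{-104},\,0)$, whose second word exceeds $\tfrac12\ulp(1)=2^{-53}$.

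The paper's Table~\ref{tab:vscmp} refutes the claim on the ordered synthetic input~[I]:
\begin{itemize}
\item at $g=31$ (a $22$-bit overlap), \textsc{VecSum3} fails $63$ times and \textsc{VecSum4} fails in all $50{,}000$ trials;
\item even $g=42$ produces a \textsc{VecSum4} failure.
\end{itemize}
By that table, an overlap of about $26$ bits at every position would already defeat \textsc{VecSum4}. So ``stays within the range \VSK{} handles'' cannot be deduced from ``ordered, $26<53$''. It depends on the actual layout of the residual input, and in the paper it rests on a measurement. On two million inputs of the residual-update shape, both renormalizers restored non-overlap in binary64 and agreed bitwise.

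The remaining ingredients are not derivable either.
\begin{itemize}
\item The $26$-bit figure is a binary64 measurement; it reappears in the frequency experiment, growing to $71$ bits after one un-renormalized iteration. Nothing ties it to $p/2$. The per-iteration convergence rate gives the wrong scale: on \texttt{lap2d:128}, CG gains about $106$ bits over $465$ iterations, well under one bit per iteration.
\item Your binary32 argument ``$26>24$'' silently assumes that this binary64 figure carries over to $p=24$.
\item Your claim that the \VS{} prefix ``re-establishes'' the exponent condition contradicts Remark~\ref{rem:notorder}. One sweep does not guarantee it; it was only observed to hold on input~[II].
\item The counterexample of Section~\ref{sec:vscmp} is an input~[II] sample ($xy+c$ in binary64), not a BiCGStab residual. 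It shows that ordering can break, not the BiCGStab mechanism.
\end{itemize}

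In the paper, Observation~\ref{obs:input} is purely empirical. It is supported by:
\begin{itemize}
\item the residual-shape experiment;
\item the $240$-pair sweep, where differences appear only in binary32 and BiCGStab, and \VSK{} violates non-overlap in $66$ rows against $37$ for \RP{};
\item the measured binary32 ovl-out values on \texttt{lap2d:128} BiCGStab: $40.4$ versus $2.00$ at $K=3$, and $10.2$ versus $1.00$ at $K=4$.
\end{itemize}
It should be presented as a measurement, not as the consequence of an ordering lemma.
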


\subsection{Execution time}

\begin{table*}[t]
\centering
\caption{$100$ CG iterations (\texttt{lap2d:128}, $n=16{,}384$, binary64,
Cortex-X925 $\times$ 10, best of seven round-robin runs, in seconds).
$\vr$-AXPY$+$N is the residual-update stage including renormalization;
$^\ast$ marks the configuration of Mukunoki and Ozaki. Plain FP64 takes
$0.0023$ s, which is the reference of the ``vs.\ FP64'' column.}
\label{tab:time}
\footnotesize
\setlength{\tabcolsep}{3pt}
\begin{tabular}{llrrrrrrr}
\toprule
Arithmetic & Renormalizer & flops & total & SpMV & DOT & $\vr$-AXPY$+$N
 & vs.\ FP64 & vs.\ \VSK{} \\
\midrule
QDW  & \textsc{VecSum2}$^\ast$ & 6 & $0.0047$ & $0.0009$ & $0.0022$ & $0.0007$ & $2.05$ & $1.000$ \\
     & \RP{}$r{=}1$           & 9  & $0.0048$ & $0.0009$ & $0.0022$ & $0.0008$ & $2.09$ & $1.018$ \\
\midrule
QTW  & \textsc{VecSum3}$^\ast$ & 12 & $0.0069$ & $0.0021$ & $0.0021$ & $0.0011$ & $2.99$ & $\mathbf{1.000}$ \\
     & \textsc{VecSum3}$\times2$ & 24 & $0.0071$ & $0.0021$ & $0.0021$ & $0.0013$ & $3.09$ & $1.032$ \\
     & \RP{}$r{=}1$           & 18 & $0.0071$ & $0.0021$ & $0.0021$ & $0.0014$ & $3.09$ & $\mathbf{1.034}$ \\
     & \RP{}$r{=}2$           & 24 & $0.0073$ & $0.0021$ & $0.0021$ & $0.0015$ & $3.16$ & $1.057$ \\
\midrule
QQW  & \textsc{VecSum4}$^\ast$ & 18 & $0.0123$ & $0.0039$ & $0.0030$ & $0.0022$ & $5.41$ & $1.000$ \\
     & \RP{}$r{=}1$           & 27 & $0.0128$ & $0.0040$ & $0.0030$ & $0.0027$ & $5.62$ & $1.038$ \\
     & \RP{}$r{=}2$           & 36 & $0.0132$ & $0.0039$ & $0.0030$ & $0.0031$ & $5.78$ & $1.068$ \\
\bottomrule
\end{tabular}
\end{table*}

Table~\ref{tab:time} gives the time for $100$ CG iterations. Measured on its
own (length $16{,}384$, binary64), \textsc{VecSum3} takes $3.32\,\mu$s against
$4.38\,\mu$s for \RP{}$r{=}1$, a factor of $1.32$, smaller than the flop ratio
$18/12=1.50$; for the whole CG solver the overhead is only $+3.4\,\%$, and it
vanishes as the SpMV becomes the bottleneck ($+0.7\,\%$ on \texttt{pdb1HYS},
whose nnz$/n$ is $119.3$).

\subsection{SIMD and accelerators}
\label{sec:simd}

Since neither pair arithmetic nor the proposed renormalizer contains a branch,
SIMD vectorization requires only replacing the word type by a vector type,
without touching the source of the arithmetic itself (AoSoA layout; the SpMV
uses one ELLPACK row per lane). With $128$-bit NEON/SVE2 this gives two lanes
in binary64 and four in binary32, and the speed-ups of SpMV/DOT/AXPY at
$n=2^{20}$ on a single thread were $1.69$--$3.73\times$ and
$2.01$--$7.25\times$ for pair arithmetic. Exceeding the lane count is presumably
the scalar version being limited by its dependence chain, but we did not
separate instruction throughput from code-generation effects, so we do not
claim it. The ranking is unchanged. The same headers compile for CUDA as they
are, with \texttt{--fmad=false}. On GB10 ($n=2^{22}$, ELLPACK row length $5$)
the narrow FP64 path makes binary64 time track the operation count directly
($4.82$ ms for pair arithmetic versus $8.82$ ms for the $92$-flop BF at $K=3$),
whereas in binary32 all three families become memory bound and land within
$1.11$--$1.12$ ms of each other.

\begin{observation}
\label{obs:gpu}
Since the motivation for pair arithmetic is to reduce the operation count, its
benefit disappears once the kernel is memory bound, as in this experiment.
Conversely, on GB10 a binary32 TW ($72$ bits) is both more accurate than plain
FP64 and faster than FP64-based multiple precision, which is one answer to the
question raised as future work in \cite{MukunokiOzaki25}.
\end{observation}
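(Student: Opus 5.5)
The statement is an empirical observation, not a theorem, so the argument I would give is a roofline-style explanation checked against the GB10 measurements of Section~\ref{sec:simd}, rather than a derivation from the lemmas of Section~\ref{sec:guarantee}. I would model the time of each kernel (SpMV, DOT, AXPY over $K$-word vectors) as
\[
T \approx \max\bigl(F/P,\; B/W\bigr),
\]
where $F$ is the flop count, $P$ the peak throughput in the base format, $B$ the bytes moved and $W$ the memory bandwidth. The key structural fact is that pair arithmetic and the renormalized (BF) multi-word arithmetic store exactly the same $K$ words per element. They therefore have the same $B$ and differ only in $F$, by a ratio read off from the operation counts quoted in the introduction (e.g.\ QDWadd $8$ versus DWadd $11$). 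It follows that the advantage of pair arithmetic is at most $F_{\mathrm{BF}}/F_{\mathrm{pair}}$, and it collapses to $1$ as soon as $B/W$ dominates.

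Second, I would locate the two base formats relative to the ridge point $P/W$ of GB10.
\begin{itemize}
\item For ELLPACK with row length $5$, the arithmetic intensity $F/B$ is a small constant per stored word. In binary32, $P_{32}$ is large, so the intensity lies below the ridge and all three families are memory bound. This predicts near-equal times, consistent with the measured $1.11$--$1.12$ ms.
\item In binary64, the narrow FP64 path makes $P_{64}$ small, pushing the ridge below the kernel intensity. Time then tracks $F$, consistent with $4.82$ ms versus $8.82$ ms for pair versus the $92$-flop BF at $K=3$.
\end{itemize}
This gives the first sentence of the observation.

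Third, for the binary32 TW claim I would argue accuracy and speed separately.
\begin{itemize}
\item \emph{Accuracy.} The nominal capacity is $3\times24=72>53$ bits. This capacity is only effective if non-overlap is maintained, which is where \RP{}$r{=}1$ enters: Table~\ref{tab:vscmp} shows it restores non-overlap for $K=3$. I would then cite the attained $\varepsilon^\ast$ of the binary32 $K=3$ runs, which lie below the binary64 floor of about $10^{-16}$.
\item \emph{Speed.} Binary32 TW moves $12$ bytes per element against $16$ for binary64 DW. On top of that, it avoids the FP64 throughput bottleneck, so its measured $1.11$ ms beats every FP64-based multi-word timing.
\end{itemize}

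The main obstacle is the accuracy half of this last step. ``More accurate than FP64'' cannot be proved from the nominal capacity alone, because pair arithmetic discards information at every operation (Section~\ref{sec:num}: ``the information already discarded does not come back''). I would therefore state it as a measured property of the screened problems, backed by the observed $\varepsilon^\ast$, rather than as a guarantee.
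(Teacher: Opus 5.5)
Your roofline argument for the first sentence and for the speed half is sound, and it goes further than the paper. The paper only reads off the GB10 numbers: the binary32 families land within $1.11$--$1.12$ ms, hence memory bound; binary64 tracks the operation count ($4.82$ versus $8.82$ ms at $K=3$); and $1.11$ ms beats both binary64 timings. Your point that pair and renormalized arithmetic move the same $K$ words, so the speed-up is at most $F_{\mathrm{BF}}/F_{\mathrm{pair}}$ and collapses to $1$ under a bandwidth bound, supplies the mechanism the paper leaves implicit. One small caveat: the only FP64-based timings reported on GB10 are for $K=3$ ($24$ bytes per element). Your $12$-versus-$16$-byte comparison with binary64 DW is therefore an extrapolation that strengthens the claim, not a reading of the data.

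The accuracy half is where the proposal has a real gap. The paper's support for ``more accurate than plain FP64'' is the nominal capacity, $3\times24=72>53$ bits, stated in the benchmark setup (``$K=3$ already exceeds plain FP64''). The GB10 experiment itself measures kernel times, not solver accuracy. You set the capacity argument aside and plan instead to cite attained $\varepsilon^\ast$ of binary32 $K=3$ solver runs lying below $10^{-16}$. The paper reports no binary32 $\varepsilon^\ast$ values at all: the only binary32 accuracy criterion is the screening threshold $\varepsilon^\ast<10^{-10}$, which is far coarser than FP64's unit roundoff.

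Your citation of Table~\ref{tab:vscmp} is also misplaced. That is a binary64 experiment (compare the remark that the three words ``do not hold $159$ bits'', i.e.\ $3\times53$). The binary32 evidence on \RP{} is the two-million-sample residual-update test, in which \RP{} still failed once. So the step you rely on rests on a measurement the paper does not contain.

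Your caveat that nominal capacity is not attained accuracy is a fair criticism of the observation itself. The consistent resolution, though, is not to cite absent data. Either read ``more accurate'' at the level of the representation, as the parenthetical ``$72$ bits'' signals, or state explicitly that the paper's data do not establish an attained-accuracy advantage over FP64.
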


\subsection{Placement and frequency of renormalization}

\begin{table}[t]
\centering
\caption{Placement of renormalization (\texttt{lap2d:128}, binary64, QTW,
$3{,}000$ iterations). Times are for $100$ iterations with $\vr$ only taken as
$1.00$; $\varepsilon^\ast$ agrees between \VSK{} and \RP{}.}
\label{tab:place}
\scriptsize
\setlength{\tabcolsep}{4pt}
\resizebox{\columnwidth}{!}{%
\begin{tabular}{lrrr}
\toprule
Placement & $\varepsilon^\ast$ & time (\VSK{}) & time (\RP{}) \\
\midrule
none        & $3.46\times10^{-21}$ & $0.97$ & $0.94$ \\
$\vr^\ast$  & $5.92\times10^{-46}$ & $1.00$ & $1.00$ \\
$\vr{+}\vp$     & $4.99\times10^{-46}$ & $1.04$ & $1.07$ \\
$\vr{+}\vq$     & $5.94\times10^{-46}$ & $1.04$ & $1.06$ \\
$\vr{+}\vx$     & $9.05\times10^{-47}$ & $1.03$ & $1.06$ \\
$\vr{+}\vp{+}\vx$ & $1.16\times10^{-47}$ & $1.07$ & $1.12$ \\
$\mathbf{\vr{+}\vp{+}\vq{+}\vx}$ & $\mathbf{7.86\times10^{-48}}$ & $\mathbf{1.10}$ & $1.19$ \\
inside DOT as well & $5.04\times10^{-48}$ & $1.57$ & $1.63$ \\
inside SpMV as well & $4.25\times10^{-48}$ & $2.23$ & $2.30$ \\
\bottomrule
\end{tabular}}
\end{table}

Table~\ref{tab:place} sweeps the placement. The solution vector $\vx$ is
never renormalized in the implementation of Mukunoki and Ozaki, yet
$\vx \leftarrow \vx+\alpha\vp$ is a pure accumulation repeated hundreds of
times, so adding a single renormalization there improves $\varepsilon^\ast$ by
$6.5\times$, and together with $\vp$ and $\vq$ by $75\times$, at a cost of
$+10\,\%$. By contrast, renormalizing inside the $O(\mathit{nnz})$ SpMV and the
$O(n)$ DOT costs $2.2\times$ the time for only $1.8\times$ the accuracy; this is
where insertions should be removed.

Lowering the \emph{frequency} of renormalizing $\vr$ to once every $m$
iterations is not an option. Setting $m=2$ alone costs $43$ orders of magnitude
in $\varepsilon^\ast$ ($5.92\times10^{-46} \to 5.74\times10^{-3}$): during one
un-renormalized iteration the overlap of $\vr$ grows from $26$ to $71$ bits,
$\rho=\vr^T\vr$ is destroyed and conjugacy is lost. \RP{} still protects the
structure here (bad-out below $0.07\,\%$ at $m=2$--$8$, against
$65$--$95\,\%$ for \VSK{}), but the bits discarded by pair arithmetic in the
un-renormalized iterations do not come back, so this does not translate into
accuracy.

\subsection{BiCGStab: once per iteration is not enough}

\begin{table}[t]
\centering
\caption{Number of converged problems and geometric means over the $13$
BiCGStab problems in binary64 ($\varepsilon^\ast$; times relative to \VSK{}
$=1.00$).}
\label{tab:fambicg}
\scriptsize
\setlength{\tabcolsep}{4pt}
\resizebox{\columnwidth}{!}{%
\begin{tabular}{llrrr}
\toprule
$K$ & Arithmetic and renormalization & conv. & $\varepsilon^\ast$ & time \\
\midrule
3 & pair, no renormalization        & $2/13$  & $6.00\times10^{-12}$ & $1.02$ \\
3 & pair $+$ \textsc{VecSum3} (M-O) & $7/13$  & $1.27\times10^{-24}$ & $1.00$ \\
3 & pair $+$ \RP{}                  & $7/13$  & $1.01\times10^{-25}$ & $1.03$ \\
3 & pair $+$ \VSK{}, $\vr{+}\vp\vq\vx$ & $8/13$  & $3.33\times10^{-35}$ & $1.04$ \\
3 & pair $+$ \RP{} every operation  & $\mathbf{13/13}$ & $8.12\times10^{-46}$ & $4.56$ \\
3 & BF FMA (renormalized every op.) & $\mathbf{13/13}$ & $7.20\times10^{-46}$ & $\mathbf{2.68}$ \\
\midrule
4 & pair $+$ \textsc{VecSum4} (M-O) & $6/13$  & $9.70\times10^{-31}$ & $1.00$ \\
4 & pair $+$ \VSK{}, $\vr{+}\vp\vq\vx$ & $8/13$  & $2.53\times10^{-44}$ & $1.04$ \\
4 & pair $+$ \RP{} every operation  & $\mathbf{13/13}$ & $1.64\times10^{-59}$ & $4.72$ \\
4 & BF FMA (renormalized every op.) & $\mathbf{13/13}$ & $4.74\times10^{-60}$ & $\mathbf{2.50}$ \\
\midrule
2 & pair $+$ \textsc{VecSum2} (M-O) & $6/13$  & $2.08\times10^{-17}$ & $1.00$ \\
2 & pair $+$ \RP{} every operation  & $11/13$ & $1.56\times10^{-30}$ & $4.77$ \\
2 & BF FMA (renormalized every op.) & $\mathbf{11/13}$ & $\mathbf{9.55\times10^{-31}}$ & $\mathbf{1.32}$ \\
\bottomrule
\end{tabular}}
\end{table}

Table~\ref{tab:fambicg} gives the number of converged problems for BiCGStab.
binary32 behaves the same way: out of $14$ problems, \VSK{} in the M-O
placement converges on $2$ irrespective of $K$, the four-point placement
reaches only $5$--$6$, and renormalizing every operation reaches $8$--$10$.

\begin{observation}
\label{obs:wall}
For BiCGStab the prescription of Mukunoki and Ozaki, one renormalization per
iteration, does not carry over. Neither a stronger renormalizer (\RP{}) nor
more placements ($\vr,\vp,\vq,\vx$) moves the count beyond $6$--$8/13$. The only
configuration that reached $13/13$ in this experiment was renormalizing every
operation, i.e.\ abandoning pair arithmetic, and the cheapest way to do that is
BF FMA \cite{KouyaFMA26}, a branch-free FMA ($2.50$--$2.68\times$ the pair
cost), not pair $+$ \RP{} at every operation ($4.56$--$4.72\times$). For $K=2$,
BF FMA raises the count from $6/13$ to $11/13$ at $1.32\times$.
\end{observation}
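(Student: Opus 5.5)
The claim in Observation~\ref{obs:wall} is empirical, so the argument I would give is a careful reading of Table~\ref{tab:fambicg} (together with the binary32 counts stated in the text), organized into its four assertions. No lemma from Section~\ref{sec:guarantee} is needed. Proposition~\ref{prop:sum} is only invoked to note that, for the $n=K$ inputs arising here, both renormalizers preserve the sum exactly. Hence any difference in convergence counts is caused by representation quality or by placement, not by lost value.

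\textbf{Step 1: the M-O prescription does not carry over.} I would read off the rows ``pair $+$ \textsc{VecSum}$K$ (M-O)'' for $K=2,3,4$. They give $6/13$, $7/13$ and $6/13$ in binary64. The text gives $2/14$ in binary32 for every $K$. These are compared with the $13/13$ attained by every-operation renormalization at $K=3,4$.

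\textbf{Step 2: neither a stronger renormalizer nor more placements breaks the wall.} For $K=3$ I would compare three rows against the M-O baseline of $7/13$:
\begin{itemize}
\item \RP{} in the M-O placement, which gives $7/13$ (a better $\varepsilon^\ast$, but no new problem solved);
\item \VSK{} in the four-point placement $\vr,\vp,\vq,\vx$, which gives $8/13$;
\item for $K=4$, the four-point placement, which again gives $8/13$.
\end{itemize}
Together with the sweep result of Table~\ref{tab:sweep} (``only \RP{} converged'' $=0$), this supports the $6$--$8/13$ ceiling. I would be explicit that the combination \RP{} $+$ four-point placement does not appear in the table. That case therefore rests on the sweep evidence and is a limitation of the claim, not a proven bound.

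\textbf{Step 3: only every-operation renormalization reaches $13/13$.} The two $13/13$ rows at $K=3,4$ are ``pair $+$ \RP{} every operation'' and ``BF FMA''. Both renormalize after every operation, so in both cases pair arithmetic has been abandoned.

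\textbf{Step 4: BF FMA is the cheapest such route.} I would compare the time columns. BF FMA costs $2.68$ and $2.50$, against $4.56$ and $4.72$ for \RP{} at every operation. For $K=2$, BF FMA goes from $6/13$ to $11/13$ at $1.32\times$, matching \RP{}-every-op's count at far lower cost.

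The main obstacle is the universality implicit in ``neither \ldots\ moves the count'': a finite experiment cannot exclude untested placements or renormalizers. I would therefore phrase the conclusion as ``in this experiment'', as the observation already does, and flag the missing \RP{} $+$ four-point cell.
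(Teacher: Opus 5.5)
Your proposal is correct and follows the paper's own support for this observation, which is simply a reading of Table~\ref{tab:fambicg} together with the binary32 counts quoted just before it. Your appeal to Table~\ref{tab:sweep} for the untabulated combination of \RP{} with the placement $\vr,\vp,\vq,\vx$ is a reasonable, honestly flagged supplement. Your side remark that Proposition~\ref{prop:sum} guarantees exact sum preservation by \RP{} overstates it: the proposition is conditional on the exponent condition at every \FTS{} gate, which the paper measured only on input~[II] and not on BiCGStab inputs, where by Observation~\ref{obs:input} the ordering breaks down. No step of your argument depends on that remark, so you should simply drop it.
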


Observation~\ref{obs:wall} delimits the scope of the proposal. The information
discarded by an operation that was not renormalized does not come back with any
renormalizer. What the proposal answers is how strong one renormalization can
be made; that does pay off where \VSK{} breaks, but it does not overcome the
constraint on frequency itself.

\section{Conclusion and Future Work}

We proposed \RP{}, a branch-free fixed-trip-count renormalizer for returning a
pair-arithmetic output to a non-overlapping $K$-word form, given as a single
construction parameterized by the word count $K$ and the input length $n$. It
consists of three stages---a \VS{} sweep, a tail fold and $r$ rounds of a
\FTS{} chain---and costs $6(n-1)+(n-K)+3r(K-1)$ flops. None of the three can be
dropped: without the \VS{} sweep, \FTS{} ceases to be an error-free
transformation and Proposition~\ref{prop:sum} collapses, and without the $n>K$
guard on the tail fold the lowest word is doubled. The latter is particularly
insidious because it slips through the non-overlap test. The smallest round
count that produced no violation over the range tested was $r=1$ for $K\le3$ and
$r=2$ for $K=4$; spending the same cost on a \VSK{}-style \TS{} chain fails for
$K=4$. For $K=3$ the proposal delivers the same quality $25\,\%$ more cheaply
than the \VSK{}-style chain.

The scope of what is guaranteed should be stated plainly. The hypothesis of
Proposition~\ref{prop:sum} depends on the exponent layout of the input and is
not established in general by one sweep (counterexample in
Remark~\ref{rem:notorder}). What can be stated mathematically is only a
conditional claim; for the pair-arithmetic inputs we target, the fact is that
the hypothesis held at all $600{,}000$ \FTS{} calls in the measurements. A
correctly rounded lowest word is likewise not guaranteed, because what
non-overlap supplies is a bound exceeding the rounding boundary by the factor
$1/(1-u)$, not the one bit telling which side of it the tail is on. This is a
limitation of the sweep-based construction (Proposition~\ref{prop:notcr}), not
an impossibility result for the branch-free fixed-trip-count model as such.

Integrating the scheme into iterative solvers did not produce a simple ranking.
Over two million instances of the input shape produced by the CG residual
update, the two renormalizers agreed bitwise wherever both restored
non-overlap; since a non-overlapping representation is not unique, this is an
observation rather than a theorem, and CG in binary64 cannot tell them apart.
Differences appear where \VSK{} breaks, namely binary32 and BiCGStab: of the
$135$ pairs out of $240$ in which both converged, $97$ agree exactly and the
remaining $38$ split $22$--$16$ in favour of \RP{}. That bias is not
significant (sign test, $p=0.42$), although the wins are large when they occur
(\texttt{raefsky2}: $39\,\%$ fewer iterations, eight orders of magnitude better
accuracy). The clear bias is structural: non-overlap violations fall from $66$
rows to $37$ and the improve/degrade split is $61$ to $6$ (sign test
$p<10^{-6}$ if the pairs are treated as independent). The price is $1.32\times$
on the renormalizer alone and $+3.4\,\%$ on the whole CG solver. In CG it is
cheaper to \emph{add placements} than to strengthen the renormalizer
(renormalizing $\vx,\vp,\vq$ as well improves the attained accuracy by a factor
of $75$ for $+10\,\%$), while in BiCGStab
the once-per-iteration placement itself is insufficient: the only configuration
reaching $13/13$ here renormalized every operation, most cheaply through a
branch-free FMA.

Four items remain. First, \emph{machine-checked proof:} every claim about
non-overlap here is experimental, and because pair-arithmetic inputs overlap,
the domination relations of FPANVerifier \cite{ZhangAiken25} cannot express an
assumption on magnitudes alone; extending that abstraction is the next step.
Second, \emph{the choice of $r$}: \eqref{eq:rchoice} was fixed experimentally
and extrapolation to $K\ge5$ must likewise rely on measurement. Third, \emph{the
frequency and placement of renormalization}: this paper gives only the one-sided
answer ``strengthen one renormalization without lowering the frequency,'' and by
Observation~\ref{obs:wall} the frequency constraint dominates for BiCGStab;
Table~\ref{tab:place} hints that a vector such as $\vx$, which does not feed
back into the recurrence, can be renormalized less often. Fourth,
\emph{accelerators}: by Observation~\ref{obs:gpu} the benefit of pair arithmetic
disappears in the memory-bound regime, so a selection guideline in terms of base
format, word count and accelerator is needed.

\section*{Acknowledgment}

This work was supported by JSPS KAKENHI Grant Number JP26K14846. The author
thanks Daichi Mukunoki and Katsuhisa Ozaki for discussions on quasi multi-word
arithmetic and for making their implementation public.

\section*{Use of Generative AI}

Generative AI (Anthropic Claude, Claude Code) was used to prepare the
manuscript and to run and aggregate the benchmark tests. The proposal of the
algorithm, the design of the experiments and the interpretation and
verification of the results were carried out by the author, who is responsible
for the content.


\end{document}